\renewcommand{\mathcal}{\EuScript}
\theoremstyle{plain}                                                           
\newtheorem{thm}{Theorem}[section]
\newtheorem{cor}[thm]{Corollary}
\theoremstyle{definition}
\newtheorem{rem}[thm]{Remark}
\newcommand{\F}{\mathcal F}
\newcommand{\field}[1]{\ensuremath{\mathbf{#1}}}
\newcommand{\Q}{\ensuremath{\field{Q}}}        
\newcommand{\Z}{\ensuremath{\field{Z}}} 
\newcommand{\M}{\mathcal{M}}
\newcommand{\Pic}{\operatorname{Pic}}
\newcommand{\IH}{{I\! H}}
\newcommand{\RH}{{R\hspace{-0.5pt} H}}
\newcommand{\CH}{\mathrm{C\hspace{-0.5pt} H}}
\title{A vanishing result for tautological classes on the moduli of K3 surfaces}
\author{Dan Petersen}
\thanks{}
\email{danpete@math.ku.dk}
\address{Institut for Matematiske Fag \\ Universitetsparken 5 \\ 2100 K{\o}benhavn \O \\ Denmark}
\begin{document} 
 \maketitle   
 
 \begin{abstract} Looijenga's vanishing theorem on the moduli space of curves $\M_g$ says that the tautological ring vanishes above degree $g-2$. We prove an analogous result for the tautological cohomology ring of the moduli space of K3 surfaces. \end{abstract}

\section{Introduction}

Let $\M_g$ be the moduli space of smooth curves of genus $g$. The \emph{tautological ring} $R^\bullet(\M_g)$ is a subring of its Chow ring $\CH^\bullet_\Q(\M_g)$ generated by particularly natural algebraic cycle classes. This tautological ring has been intensely studied in the past decades, in particular because of the influential series of conjectures of Faber \cite{faberconjectures}. The first major result about the tautological ring is Looijenga's theorem \cite{looijengatautological} (whose statement was one Faber's conjectures), which says that $R^k(\M_g) = 0$ for $k>g-2$ and that $R^{g-2}(\M_g)\cong \Q$; thus the tautological ring is significantly smaller than one would expect a priori, since the Chow ring itself will certainly not have any such vanishing properties in general.

Let $\F_g$ be the moduli space of quasi-polarized K3 surfaces of genus $g$. The intersection theory of the moduli spaces $\F_g$ has been the subject of recent attention \cite{gwnl,bergeronlimillsonmoeglin,marianopreapandharipande,pandharipandeyin}, and tautological rings $R^\bullet(\F_g)$ have been defined by analogy with the curve case. The first proposed definition was due to van der Geer and Katsura \cite{vandergeerkatsura}, who suggested that the subring generated by $\lambda$ (the first Chern class of the Hodge line bundle) should be the natural tautological ring in this case, and they proved that this ring equals $\Q[\lambda]/(\lambda^{18})$. But it seems more natural to include also classes of \emph{special cycles} inside $\F_g$ as part of the tautological ring.

To be more precise, let $L$ be the lattice $H^2(X,\Z)$, where $X$ is a K3 surface. Let $\Lambda \subset L$ be a sublattice of signature $(1,19-d)$. We say that a \emph{$\Lambda$-polarized K3 surface} is a K3 surface $X$ equipped with a primitive embedding $\Lambda \hookrightarrow \Pic(X)$ for which the image of $\Lambda$ contains a class $D$ with the property that $D^2>0$ and $D \cdot C \geq 0$ for all curves $C \subset X$ (that is, $D$ is a quasi-polarization of $X$). Let $M_\Lambda$ denote the moduli space of $\Lambda$-polarized K3 surfaces. It is an orthogonal modular variety of dimension $d$. For $d=19$ we recover the moduli spaces $\F_g$. If $\Lambda \subset \Lambda'$ then $M_{\Lambda'} \subset M_\Lambda$. The subvarieties $M_{\Lambda'} \subset M_\Lambda$ are called \emph{Noether--Lefschetz loci} in $M_\Lambda$. Those of codimension one are called Noether--Lefschetz divisors. 

We define the tautological ring of $M_\Lambda$ to be the subring of $\CH^\bullet_\Q(M_\Lambda)$ generated by the classes of all Noether--Lefschetz loci. The resulting tautological ring would a priori seem to be smaller than the one defined in \cite[Section 4]{marianopreapandharipande} (which includes certain kappa classes), but Pandharipande and Yin \cite{pandharipandeyin} have recently proved that the two notions in fact agree, as conjectured in \cite[Conjecture 2]{marianopreapandharipande}. The same result has been independently obtained by Bergeron and Li (pers.\ comm.) for the image of the tautological ring in cohomology. 

The goal of this note is to prove the analogue of the vanishing part of Looijenga's theorem for the tautological ring of $\F_g$ in cohomology: more precisely, if $\RH^\bullet(\F_g)$ denotes the image of the tautological ring in cohomology, then we prove that $\RH^k(\F_g)=0$ for $k>34$.

\textbf{Acknowledgements.} I am grateful to Zhiyuan Li for nice discussions on these topics at Oberwolfach. 

\section{The main result}We keep the notation from the introduction. Let us recall the proof of the result of van der Geer and Katsura mentioned previously.
\begin{thm}[van der Geer--Katsura]If $\dim M_\Lambda = d$, $3 \leq d \leq 19$, then $\lambda^{d-1} =0$ and $\lambda^{d-2} \neq 0$ in $H^\bullet(M_\Lambda,\Q)$. 
\end{thm}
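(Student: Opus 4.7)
I would work throughout with the Baily--Borel compactification $M^*$ of $M_\Lambda$. Recall that $M^*$ is projective, that $\lambda$ extends to an ample $\Q$-line bundle on $M^*$, and that the boundary $\partial M^* = M^* \setminus M_\Lambda$ has complex dimension at most one (a union of cusps and modular curves). The smallness of this boundary---codimension $d-1$ in $M^*$---is the key ingredient.

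For the non-vanishing $\lambda^{d-2} \neq 0$ in $H^{2d-4}(M_\Lambda, \Q)$ I would use Poincar\'e duality on the smooth open variety $M_\Lambda$. Since $\partial M^*$ has real dimension at most $2$ and $d \geq 3$, the long exact sequence of the pair $(M^*, \partial M^*)$,
$$\cdots \to H^3(\partial M^*) \to H^4_c(M_\Lambda) \to H^4(M^*) \to H^4(\partial M^*) \to \cdots,$$
has vanishing outer terms, so $\lambda^2 \in H^4(M^*)$ lifts to a class $\lambda^2_c \in H^4_c(M_\Lambda)$; the same argument in top degree shows that $H^{2d}_c(M_\Lambda) \to H^{2d}(M^*)$ is an isomorphism. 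The cup product $\lambda^{d-2} \cup \lambda^2_c \in H^{2d}_c(M_\Lambda)$ then maps to $\lambda^d \in H^{2d}(M^*)$, which is strictly positive by the ampleness of $\lambda$. By the Poincar\'e duality pairing on $M_\Lambda$ this forces $\lambda^{d-2} \neq 0$.

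For the vanishing $\lambda^{d-1} = 0$ in $H^{2d-2}(M_\Lambda, \Q)$ I would proceed by induction on $d$. The key structural input is that $\lambda$ can be written in $H^2(M_\Lambda, \Q)$ as a rational combination $\sum_i a_i [M_{\Lambda_i}]$ of Noether--Lefschetz divisor classes, which for $d \geq 3$ follows from the existence of Borcherds products. Each $M_{\Lambda_i}$ is a $(d-1)$-dimensional orthogonal modular variety on which $\lambda$ restricts to the Hodge class, so the projection formula gives
$$\lambda^{d-1} = \sum_i a_i (\iota_i)_* \bigl(\lambda^{d-2}\big|_{M_{\Lambda_i}}\bigr),$$
which vanishes by the inductive hypothesis applied in dimension $d-1$.

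I expect the main obstacle to be the base case $d = 3$: one must show $\lambda^2 = 0$ in $H^4(M_\Lambda, \Q)$ directly, since the inductive reduction fails there (the relevant two-dimensional NL divisors have $\lambda \neq 0$, being of Hilbert modular type). Via the long exact sequence for $(M^*, \partial M^*)$ this base case is equivalent to exhibiting $\lambda^2 \in H^4(M^*)$ as a $\Q$-combination of the fundamental classes of the one-dimensional boundary modular curves---a genuinely global statement about the boundary structure of $M^*$ that does not follow from purely dimensional considerations, and will probably require the ampleness of $\lambda$ together with explicit intersection-number calculations involving the automorphic structure on the boundary.
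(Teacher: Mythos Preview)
Your inductive argument for the vanishing and your use of Baily--Borel plus the small boundary for the nonvanishing are exactly the strategy of the paper. The paper phrases the nonvanishing slightly differently---it intersects $M_\Lambda^\ast$ with two generic hyperplanes to produce a compact $(d-2)$-fold $Y\subset M_\Lambda$ on which $\lambda^{d-2}$ has positive degree---but this is the geometric dual of your compactly-supported/Poincar\'e-duality computation, and the inputs (ampleness of $\lambda$, one-dimensional boundary) are identical.

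The one place your proposal falls short is the base case $d=3$. You correctly flag it as the obstacle, but your suggested route (rewriting $\lambda^2$ on $M^\ast$ in terms of boundary curve classes via explicit intersection numbers) is speculative and not carried out. The paper handles this step by an identification you do not mention: for $d=3$ the space $M_\Lambda$ is a Siegel threefold, and the relation $\lambda^2=0$ there is classical. So the ``global statement about the boundary structure'' you anticipate needing is replaced by recognizing the variety and invoking a known vanishing.
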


\begin{proof}
	We begin with the vanishing result. One proceeds by induction on $d$. When $d=3$, $M_\Lambda$ is a Siegel threefold and it is well known that $\lambda^2=0$ in this case. 
	
	For $d>3$, the class $\lambda^{d-2}$ multiplied by any Noether--Lefschetz divisor on $M_\Lambda$ vanishes, by the induction hypothesis. But $\lambda$ itself is a linear combination of classes of Noether--Lefschetz divisors \cite[Theorem 1.2]{bkps} --- more generally, every class in $H^2(M_\Lambda,\Q)$ is in fact a linear combination of Noether--Lefschetz divisors, after \cite{bergeronlimillsonmoeglin}. Thus $\lambda^{d-1}=0$. 
	
	For the nonvanishing result, one uses that the Baily--Borel compactification $M_\Lambda \subset M_\Lambda^\ast$ is a projective variety, and that $M_\Lambda^\ast \setminus M_\Lambda$ is of dimension one. It follows that the intersection of $M_\Lambda^\ast$ with two general hyperplanes gives a compact subvariety $Y \subset M_\Lambda$ of dimension $d-2$. Since $\lambda$ is ample on $M_\Lambda$, it follows that $\lambda^{d-2}$ is nonzero on $Y$, so $\lambda^{d-2}$ can not vanish.\end{proof}

\begin{thm}If $\dim M_\Lambda = d$, $4 \leq d \leq 19$, then $W_k H^k(M_\Lambda,\Q) =0$
	for $k > 2(d-2)$, where $W_\bullet$ denotes the weight filtration on the mixed Hodge structure on the cohomology of $M_\Lambda$. 
\end{thm}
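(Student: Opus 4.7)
The plan is to recognize $W_k H^k(M_\Lambda, \Q)$ as the image of the intersection cohomology of the Baily--Borel compactification and then combine hard Lefschetz on $\IH^\bullet$ with the argument already used in Theorem~2.1. Let $Y := M_\Lambda^\ast$ denote the Baily--Borel compactification; it is projective of dimension $d$ with boundary $Y \setminus M_\Lambda$ of dimension $\leq 1$ in the range under consideration. Choose a resolution $\pi\colon \tilde Y \to Y$ that is an isomorphism over $M_\Lambda$. Since $W_k H^k(M_\Lambda) = \operatorname{Im}(H^k(\tilde Y, \Q) \to H^k(M_\Lambda, \Q))$ for any smooth projective compactification, and since the BBD decomposition theorem gives $H^k(\tilde Y, \Q) \cong \IH^k(Y, \Q) \oplus E^k$ with $E^k$ supported on the boundary (and so vanishing after restriction to $M_\Lambda$), the first step is the identification
\[ W_k H^k(M_\Lambda, \Q) = \operatorname{Im}\bigl(\IH^k(Y, \Q) \to H^k(M_\Lambda, \Q)\bigr). \]

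Next I would invoke hard Lefschetz on intersection cohomology: multiplication by $\lambda^{k-d}$ is an isomorphism $\IH^{2d - k}(Y, \Q) \xrightarrow{\sim} \IH^k(Y, \Q)$ for $k \geq d$, where $\lambda$ is the ample Hodge class on $Y$. Restricting to $M_\Lambda$ this yields the key identity
\[ W_k H^k(M_\Lambda, \Q) = \lambda^{k-d}|_{M_\Lambda} \cdot W_{2d - k} H^{2d - k}(M_\Lambda, \Q) \qquad (k \geq d). \]
For $k > 2(d - 2)$ the index $j := 2d - k$ ranges over $\{0, 1, 2, 3\}$, so it suffices to show $\lambda^{d - j}|_{M_\Lambda} \cdot W_j H^j(M_\Lambda, \Q) = 0$ for each such $j$.

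The cases $j = 0$ and $j = 1$ are immediate from Theorem~2.1: one has $d - j \geq d - 1$, hence $\lambda^{d-j}|_{M_\Lambda} = 0$. For $j = 2$, the result of Bergeron--Li--Millson--Moeglin cited in the proof of Theorem~2.1 says that $H^2(M_\Lambda, \Q)$ is spanned by classes of Noether--Lefschetz divisors; for any such $i\colon D \hookrightarrow M_\Lambda$ (of dimension $d - 1$), Theorem~2.1 applied to $D$ gives $\lambda^{d-2}|_D = 0$, so the projection formula yields $\lambda^{d-2} \cdot [D] = i_\ast(\lambda^{d-2}|_D) = 0$; summing over a spanning set settles this case.

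The remaining case $j = 3$ is where I expect the main obstacle to lie: one needs to show $\lambda^{d-3} \cdot W_3 H^3(M_\Lambda, \Q) = 0$. A natural attack is by induction on $d$, combined with the observation that any class in $W_3 H^3(M_\Lambda)$ restricts to zero in the top-weight part of the degree-$(2d-3)$ cohomology of every Noether--Lefschetz divisor (by the inductive hypothesis, since $2d - 3 > 2(d-1)-4$). To close the argument one needs a vanishing input for the pure part of the odd-degree cohomology of $M_\Lambda$ in small degree --- ultimately traceable to the triviality of the odd cohomology of the compact dual quadric $Q_d$ and the Vogan--Zuckerman classification of cohomological $(\mathfrak g, K)$-modules for $\mathrm{O}(2, d)$. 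Handling this last case cleanly is the technical heart of the proof.
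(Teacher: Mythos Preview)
Your strategy is exactly the one in the paper: identify $W_kH^k(M_\Lambda,\Q)$ with the image of $\IH^k(M_\Lambda^\ast,\Q)$, apply hard Lefschetz for the ample class $\lambda$ to reduce to degree $2d-k\le 3$, and kill the $j=2$ case using that $H^2$ is spanned by Noether--Lefschetz divisors together with Theorem~2.1 on each divisor. So the overall architecture matches.

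The one substantive divergence is your treatment of $j=3$. You call it ``the technical heart of the proof'' and propose an inductive argument backed by a Vogan--Zuckerman input. The paper, by contrast, dismisses the odd-degree cases in a single phrase (``the only nontrivial case is when $\beta\in H^2$''), without further comment. In effect the paper is silently using that the relevant odd-degree classes vanish --- e.g.\ that $\IH^1(M_\Lambda^\ast,\Q)=\IH^3(M_\Lambda^\ast,\Q)=0$ for $d\ge 4$, which does follow from the Zucker conjecture plus the Vogan--Zuckerman classification (the compact dual quadric has no odd cohomology below the middle). So you have correctly located the one place where an external input is being used, and your diagnosis of what that input is (odd low-degree vanishing, traceable to the compact dual and Vogan--Zuckerman) is right. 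What you do not need is the induction on $d$ or the restriction-to-NL-divisor maneuver in this case: once you have $\IH^3(M_\Lambda^\ast,\Q)=0$ (equivalently $W_3H^3(M_\Lambda,\Q)=0$), the case $j=3$ is empty and there is nothing further to prove. Your inductive scheme would work but is more than is required.
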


\begin{proof}
	Pick $\alpha$ in $W_k H^k(M_\Lambda,\Q)$. The fact that $\alpha$ is of pure weight implies that it is in the image of the map $$ \IH^k(M_\Lambda^\ast,\Q) \to H^k(M_\Lambda,\Q)$$
	from intersection cohomology, see e.g.\ \cite[Lemma 2]{durfee}. Since intersection cohomology satisfies Hard Lefschetz and $\lambda$ is ample on $M_\Lambda^\ast$, we can write $\alpha = \lambda^{k-d}\cdot \beta$ with $\beta \in H^{2d-k}(M_\Lambda,\Q)$. Since $k>2(d-2)$, $\beta$ has degree at most $3$. The only nontrivial case is when $\beta \in H^2(M_\Lambda,\Q)$. In this case, $\beta$ is a linear combination of Noether--Lefschetz divisors  \cite{bergeronlimillsonmoeglin}, and $\alpha = \lambda^{d-2}\cdot \beta$ must vanish, as observed in the proof of the previous theorem.
\end{proof}

\begin{cor}
	The tautological cohomology ring $\RH^\bullet(\F_g)$ of the moduli space of polarized K3 surfaces vanishes above cohomology degree $34$. 
\end{cor}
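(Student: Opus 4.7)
The plan is to observe that the corollary is almost a formal consequence of the preceding theorem, coupled with the standard fact that algebraic cycle classes on a smooth variety are of lowest weight in the mixed Hodge structure.

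First I would specialize the preceding theorem to $d=19$: for any $\Lambda$ of signature $(1,0)$, $\F_g = M_\Lambda$ has dimension $19$, so $W_k H^k(\F_g,\Q) = 0$ for $k > 2(19-2) = 34$. It therefore suffices to prove the inclusion $\RH^k(\F_g) \subseteq W_k H^k(\F_g,\Q)$ for every $k$; equivalently, that every tautological cohomology class in degree $k$ is of pure weight $k$.

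Next I would verify this purity statement. By Deligne, on any smooth variety $U$ the class in $H^{2p}(U,\Q)$ of an algebraic cycle of codimension $p$ lies in $W_{2p} H^{2p}(U,\Q)$: concretely, picking a smooth compactification $\bar U \supset U$ and taking $\bar Z$ to be the closure of $Z$, one has $[\bar Z] \in H^{2p}(\bar U,\Q)$ pure of weight $2p$, and the restriction $H^{2p}(\bar U,\Q) \to H^{2p}(U,\Q)$ is a morphism of mixed Hodge structures with image exactly $W_{2p} H^{2p}(U,\Q)$ sending $[\bar Z] \mapsto [Z]$. Hence every class of a Noether--Lefschetz locus of codimension $p$ in $\F_g$ lies in $W_{2p} H^{2p}(\F_g,\Q)$. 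Since cup product is a morphism of mixed Hodge structures and the weight filtration is multiplicative, an arbitrary product of such classes of total codimension $p$ again lies in $W_{2p} H^{2p}(\F_g,\Q)$. Combined with the preceding theorem, this gives $\RH^k(\F_g) = 0$ for $k > 34$.

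There is essentially no obstacle beyond invoking the preceding theorem: the only substantive additional input is the mixed Hodge theoretic fact that algebraic cycle classes on a smooth variety are of lowest weight, which is standard. The real work has already been done in establishing the vanishing of $W_k H^k(M_\Lambda,\Q)$ via the intersection-cohomology Hard Lefschetz argument.
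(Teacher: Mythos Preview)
Your argument is correct and is exactly the approach taken in the paper: specialize the preceding theorem to $d=19$, and observe that tautological classes, being algebraic cycle classes, lie in the lowest weight piece $W_kH^k(\F_g,\Q)$. The paper states this in two sentences, whereas you spell out the Deligne compactification argument and the multiplicativity of the weight filtration; these details are correct and only make the reasoning more explicit.
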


\begin{proof}
	A tautological class is in particular the class of an algebraic cycle, so it lies in the pure part of the mixed Hodge structure. Thus the vanishing follows from the case $d=19$ of the previous theorem.
\end{proof}

\begin{rem}
	According to \cite{bergeronlimillsonmoeglin}, $H^k(\F_g,\Q)$ is tautological for $k\leq 8$. Therefore the same Hard Lefschetz argument shows more generally that every class in $W_k H^k(\F_g,\Q)$ is tautological for $k \geq 30$. 
\end{rem}

\begin{rem}
	It is natural to ask whether the tautological ring of $\F_g$ is one-dimensional in the top degree. There seems to be no compelling reason to believe this, however, other than by analogy with Looijenga's theorem about the tautological ring of the moduli space of curves $\mathcal M_g$. By the previous remark, this question (in cohomology) is equivalent to asking whether $W_{34}H^{34}(\F_g,\Q) \cong \Q(-17)$, or if it is larger. This seems like a more approachable problem, e.g.\ via the general theory of boundary cohomology/Eisenstein cohomology of Shimura varieties. 
\end{rem}


\bibliographystyle{alpha}
\bibliography{../database}

\end{document}